\theoremstyle{plain}
\newtheorem{Theorem}{Theorem}[section]
\theoremstyle{proposition}
\newtheorem{proposition}{Proposition}
\theoremstyle{lemma}
\newtheorem{lemma}{Lemma}
\theoremstyle{remark}
\newtheorem{remark}{Remark}
\theoremstyle{definition}
\newtheorem{definition}{Definition}
\begin{document}
	
	\title []{ A Characterization of Zero  Divisors and Topological Divisors of Zero in $\mathbf{C[\MakeLowercase{a,b}]}$ and $\ell^\infty$ }
	
	%\author{H. Chandra, Anurag K. Patel}
	
	\author[]{Harish Chandra}
	\address{Harish Chandra \\Department of Mathematics \\ Banaras Hindu University \\ Varanasi 221005, India}
	\email{harishc@bhu.ac.in}
	
	\author[]{Anurag Kumar Patel}
	\address{Anurag Kumar Patel \\ Department of Mathematics \\ Banaras Hindu University \\ Varanasi 221005, India}
	\email{anuragrajme@gmail.com}

%	\CorrespondingAuthor{Anurag Kumar Patel}
	
	%\date{\today}
	
	\keywords{ Zero divisor, Topological divisor of zero }
	\subjclass{ Primary 13A70, 46H05 }

	\begin{abstract}
	We give a characterization of zero divisors of the ring $C[a,b].$ Using the Weierstrass approximation theorem, we completely characterize topological divisors of zero of the Banach algebra $C[a,b].$ We also characterize the zero divisors and topological divisors of zero in $\ell^\infty.$  Further,  we show that zero is the only zero divisor in the disk algebra $\mathscr{A}(\mathbb{D})$ and that the class of singular elements in $\mathscr{A}(\mathbb{D})$ properly contains the class of topological divisors of zero. Lastly, we construct a class of topological divisors of zero of $\mathscr{A}(\mathbb{D})$ which are not zero divisors.	
	\end{abstract}
	
	\maketitle
	
\section{Introduction}

Throughout this paper, $\mathbb{N}$ denotes the set of all natural numbers, $\mathbb{C}$ denotes the set of complex numbers, $C[a,b]$ denotes the Banach algebra of all continuous complex valued functions on the closed interval $[a,b]$ under the supremum norm. Further, $\ell^\infty$ denotes the Banach algebra of all bounded sequences of complex numbers, $\mathcal{C}_{0}$ denotes the space of all sequences of complex numbers converging to $0$ and $\mathcal{C}_{00}$ denotes the space of all sequences of complex numbers whose all but finitely many terms are zero.  Let $\mathbb{D}=\{z\in \mathbb{C}:|z|<1\},$ $\mathbb{\bar{D}}$ be its topological closure and  $\mathbb{T}=\{z\in \mathbb{C}:|z|=1\}$ denote the unit circle. Let $\mathscr{A}(\mathbb{D})$ denote the disk algebra, the sup-normed Banach algebra of functions continuous on $\mathbb{\bar{D}},$ which are analytic in $\mathbb{D}.$ 

\begin{definition}[Zero Set]\label{def1}
	Let $f \in C[a,b]$. Then the zero set of $f$ is the set defined by 
	$$Z_f=\{x\in [a,b] : f(x)=0\}.$$
\end{definition}
\begin{lemma}\label{shiker}
	Let $f \in C[0,1]$. Then the zero set of $f$ is a closed set.
	\begin{definition}(\cite{Simmons})\label{anurag1}
		Let $\mathcal{A}$ be a Banach algebra. An element $x\in \mathcal{A}$ is said to be regular if there exists an element $y\in \mathcal{A}$ such that $ xy=yx=1.$ An element $x\in \mathcal{A}$ is  singular if it is not regular. 
	\end{definition}
	\begin{definition}
		A sequence $(x_n)_{n=1}^\infty$ of complex numbers is said to be ``bounded away from zero" if there exists a positive constant $\delta>0$ so that $|x_n|\geq\delta$ for all $n\in{\mathbb{N}}.$
	\end{definition}
\end{lemma}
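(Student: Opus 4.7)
The plan is to recognize that $Z_f$ is precisely the preimage of a closed set under a continuous map, and then invoke the standard topological fact. Explicitly, view $f$ as a continuous map $f\colon [0,1]\to\mathbb{C}$. Then by Definition \ref{def1},
\[
Z_f \;=\; \{x\in[0,1] : f(x)=0\} \;=\; f^{-1}(\{0\}).
\]
Since $\{0\}$ is a closed subset of $\mathbb{C}$ and $f$ is continuous, its preimage $f^{-1}(\{0\})$ is closed in the domain $[0,1]$, which gives the claim in one line.

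If a more hands-on argument is preferred (to keep the exposition self-contained and not rely on the abstract continuous-preimage theorem), I would verify closedness via sequences. Take any sequence $(x_n)_{n=1}^\infty \subset Z_f$ with $x_n \to x$ for some $x \in [0,1]$. Continuity of $f$ at $x$ yields $f(x_n)\to f(x)$, but the left-hand side is the constant sequence $0$, so $f(x)=0$ and hence $x\in Z_f$. Thus $Z_f$ contains all its limit points and is closed.

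I do not expect any real obstacle here, as the assertion is essentially a tautological consequence of continuity plus the fact that singletons in $\mathbb{C}$ are closed. The only minor point to flag is that $[0,1]$ is itself closed in $\mathbb{R}$, so $Z_f$ is closed both as a subset of $[0,1]$ and as a subset of $\mathbb{R}$; either interpretation suffices for the uses the lemma is presumably going to be put to later in the paper.
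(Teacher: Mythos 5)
Your proof is correct; both the one-line preimage argument ($Z_f = f^{-1}(\{0\})$, a closed set pulled back under a continuous map) and the sequential verification are standard and complete. Note that the paper itself states this lemma without any proof, treating it as a known fact, so there is nothing to compare against; your sequential version is in fact the same style of reasoning the paper later invokes (via its Lemma on closures and convergent sequences) in the converse direction of the zero-divisor characterization, so either of your arguments would fit naturally into the text.
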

\begin{lemma}\emph{(\cite{Kreyszig})}\label{shiker2}
	Let $A$ be a subset of a metric space $(X,d)$. Then the following statements are equivalent:
	\begin{enumerate}
		\item[$(1)$] $A$ is nowhere dense.
		\item[$(2)$] $\bar{A}$ does not contain any non-empty open set.
	\end{enumerate}
\end{lemma}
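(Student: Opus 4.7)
The plan is to prove the equivalence by unpacking the customary definition of \emph{nowhere dense}: a set $A\subseteq X$ is nowhere dense if and only if $X\setminus\bar{A}$ is dense in $X$. With this characterization in hand, both implications reduce to short contrapositive arguments using only the definitions of open set and density, so no deeper topological machinery is needed.

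For the direction $(1)\Rightarrow(2)$, I would assume $A$ is nowhere dense, i.e.\ $X\setminus\bar{A}$ is dense in $X$, and argue by contradiction. If $\bar{A}$ contained some non-empty open set $U$, then density of $X\setminus\bar{A}$ would force $U\cap(X\setminus\bar{A})\neq\emptyset$, contradicting $U\subseteq\bar{A}$. Hence no such $U$ exists, which is exactly $(2)$.

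For $(2)\Rightarrow(1)$, I would take an arbitrary non-empty open set $V\subseteq X$ and show that it meets $X\setminus\bar{A}$. If on the contrary $V\cap(X\setminus\bar{A})=\emptyset$, then $V\subseteq\bar{A}$, exhibiting a non-empty open subset of $\bar{A}$ and violating the hypothesis in $(2)$. Since $V$ was arbitrary, $X\setminus\bar{A}$ is dense in $X$, i.e.\ $A$ is nowhere dense.

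The only real obstacle here is terminological: different authors adopt either $(1)$ or $(2)$ (or the variant ``$\bar{A}$ has empty interior'') as the \emph{defining} property of nowhere dense, and the content of the lemma is precisely the mutual translation between these conventions. Once the starting definition is fixed, both implications follow from the one-line arguments above, and the lemma amounts to recasting the definition of nowhere dense in terms of the absence of open subsets in $\bar{A}$.
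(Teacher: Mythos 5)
Your argument is correct. Note first that the paper itself offers no proof of this lemma --- it is quoted from Kreyszig as a known fact --- so there is no in-paper argument to compare against; your proposal supplies what the paper omits. Taking ``$X\setminus\bar{A}$ is dense in $X$'' as the definition of nowhere dense, both of your implications are sound: each is the observation that a non-empty open set $U$ satisfies $U\subseteq\bar{A}$ if and only if $U$ fails to meet $X\setminus\bar{A}$, combined with the characterization of density as meeting every non-empty open set. Your closing remark is also the right caveat, and worth making explicit: in Kreyszig's own convention, $A$ is nowhere dense (``rare'') when $\bar{A}$ has empty interior, and under that definition the lemma is essentially immediate, since the interior of $\bar{A}$ is the union of all open sets contained in $\bar{A}$, hence is empty precisely when $\bar{A}$ contains no non-empty open set. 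So the mathematical content of the lemma is exactly the translation between the two standard formulations, and your proof carries it out correctly; the only thing to pin down in a final write-up is which formulation you are treating as the definition, so that the statement does not read as circular.
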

\begin{lemma}\label{shiker3}
	Let $(X,d)$ be a metric space. If $A $ is a closed nowhere dense subset of $X$, then the complement $A^c$ of $A$ is an open dense set.
\end{lemma}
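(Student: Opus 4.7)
The statement has two parts to verify: that $A^c$ is open, and that $A^c$ is dense in $X$. The openness is immediate and will take one line: since $A$ is closed by hypothesis, its complement $A^c$ is open by definition of a closed set.

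For the density part, my plan is to use Lemma \ref{shiker2} as the main tool. Since $A$ is nowhere dense, that lemma tells us $\bar{A}$ contains no non-empty open set. But we are also given that $A$ is closed, so $A = \bar{A}$, which means $A$ itself contains no non-empty open set.

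To conclude density of $A^c$, I would take an arbitrary non-empty open set $U \subseteq X$ and show $U \cap A^c \neq \emptyset$. If this intersection were empty, then $U \subseteq A$, giving a non-empty open subset of $A$, which contradicts the consequence of Lemma \ref{shiker2} established in the previous step. Therefore every non-empty open set meets $A^c$, which is the definition (in a metric space) of $A^c$ being dense in $X$.

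There is essentially no obstacle here; the lemma is a direct consequence of the equivalence in Lemma \ref{shiker2} combined with the hypothesis that $A$ is closed (so that $\bar{A}$ can be replaced by $A$). The only small care needed is to state density in the form "every non-empty open set intersects $A^c$" rather than "$\overline{A^c} = X$", since the former is what plugs in cleanly against Lemma \ref{shiker2}; these are of course equivalent formulations in any topological space.
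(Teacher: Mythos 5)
Your proof is correct. Note that the paper states Lemma \ref{shiker3} without giving any proof, so there is no argument of the authors' to compare against; your argument --- openness from $A$ being closed, and density by combining Lemma \ref{shiker2} with $A=\bar{A}$ and the observation that a non-empty open set disjoint from $A^c$ would be a non-empty open subset of $\bar{A}$ --- is the standard one and correctly fills this gap, including the right choice of the ``every non-empty open set meets $A^c$'' formulation of density.
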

\begin{lemma}\emph{(\cite{Kreyszig})}\emph{[Closure, Closed Set]}\label{shiker4}
	Let $M$ be a nonempty subset of a metric space $(X,d)$ and $\overline{M}$ be  its closure, then 
	\begin{enumerate}
		\item[$(1)$] $x\in \overline{M}$ if and only if there is a sequence $(x_n)_{n=1}^\infty$ in $M$ such that $x_n\to x$ as $n\to \infty$.
		\item[$(2)$] $M$ is closed if and only if the situation $x_n \in M$, $x_n \to x$ as $n\to \infty$  implies that $x\in M$.
	\end{enumerate} 
\end{lemma}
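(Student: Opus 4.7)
The plan is to derive both statements directly from the definition of closure in a metric space, namely that $x\in\overline{M}$ if and only if every open ball $B(x,\varepsilon)=\{y\in X:d(x,y)<\varepsilon\}$ with $\varepsilon>0$ meets $M$. I take this as the working definition; if one instead starts from $\overline{M}=M\cup M'$ (union with the derived set), the argument is the same except one handles $x\in M$ separately by taking the constant sequence $x_n=x$.

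For part (1), I would argue each direction by applying the ball characterization. Suppose first that $x\in\overline{M}$. For each $n\in\mathbb{N}$, the ball $B(x,1/n)$ meets $M$, so I may choose $x_n\in M\cap B(x,1/n)$; the resulting sequence lies in $M$ and satisfies $d(x_n,x)<1/n\to 0$, so $x_n\to x$. Conversely, assume $(x_n)\subset M$ with $x_n\to x$. Given any $\varepsilon>0$, convergence supplies an $N$ with $d(x_N,x)<\varepsilon$, so $x_N\in M\cap B(x,\varepsilon)$; since $\varepsilon>0$ was arbitrary, every open ball at $x$ meets $M$, whence $x\in\overline{M}$.

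For part (2), I would invoke the standard fact that $M$ is closed precisely when $M=\overline{M}$ and bootstrap from (1). If $M$ is closed and $(x_n)\subset M$ converges to $x$, then part (1) places $x\in\overline{M}=M$. Conversely, assume the sequential condition. Since $M\subseteq\overline{M}$ always holds, it suffices to prove the reverse inclusion: for any $x\in\overline{M}$, part (1) furnishes a sequence $(x_n)\subset M$ with $x_n\to x$, and the hypothesis forces $x\in M$. Thus $M=\overline{M}$, so $M$ is closed.

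There is no serious obstacle here, since the result is classical; the only point to watch is consistency with the chosen definition of closure, and the fact that both implications in (1) need the axiom of choice (or at least countable choice) to select the points $x_n$ — an issue that is customarily suppressed in this context and which I would likewise not belabor.
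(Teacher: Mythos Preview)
Your proof is correct and entirely standard. Note, however, that the paper does not actually prove this lemma: it is merely stated with a citation to Kreyszig and used as a black box, so there is no ``paper's own proof'' to compare against. Your argument is exactly the textbook one and would serve perfectly well if a proof were to be included.
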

\begin{Theorem}\emph{(\cite{Rudin})}\emph{[The Weierstrass Approximation Theorem]}
	If $f$ is a continuous complex function on $[a,b]$, and $\epsilon>0$ is given. Then there exists a polynomial $p$ such that 
	$$|f(x)-p(x)|<\epsilon \mbox{ for all } x\in [a,b].$$ 
\end{Theorem}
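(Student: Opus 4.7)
The plan is to give the classical Bernstein-polynomial proof. As a first reduction, I would use the affine change of variable $t = (x-a)/(b-a)$ to transport the problem to $C[0,1]$: if a polynomial $q(t)$ uniformly approximates $f\bigl(a+(b-a)t\bigr)$ on $[0,1]$ within $\epsilon$, then $q\bigl((x-a)/(b-a)\bigr)$ is a polynomial in $x$ that approximates $f$ on $[a,b]$ within $\epsilon$. Next, by writing $f = u + iv$ with $u,v$ real-valued and continuous and approximating each part by a real polynomial, it suffices to handle the real-valued case on $[0,1]$.

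For real-valued $f \in C[0,1]$, I would introduce the Bernstein polynomials
$$B_n(f)(x) \;=\; \sum_{k=0}^{n} f\!\left(\tfrac{k}{n}\right) \binom{n}{k} x^{k}(1-x)^{n-k}, \qquad n \in \mathbb{N},$$
and prove $B_n(f) \to f$ uniformly on $[0,1]$. The proof rests on three identities obtained by differentiating the binomial theorem in $x$ and resubstituting: $\sum_{k=0}^{n}\binom{n}{k}x^{k}(1-x)^{n-k} = 1$, $\sum_{k=0}^{n}k\binom{n}{k}x^{k}(1-x)^{n-k} = nx$, and $\sum_{k=0}^{n}k(k-1)\binom{n}{k}x^{k}(1-x)^{n-k} = n(n-1)x^{2}$. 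Combining these yields the variance estimate
$$\sum_{k=0}^{n}\left(\tfrac{k}{n}-x\right)^{2}\binom{n}{k}x^{k}(1-x)^{n-k} \;=\; \tfrac{x(1-x)}{n} \;\le\; \tfrac{1}{4n}.$$

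Given $\epsilon > 0$, I would invoke uniform continuity of $f$ on $[0,1]$ to pick $\delta > 0$ with $|f(x)-f(y)| < \epsilon/2$ whenever $|x-y|<\delta$, set $M = \sup_{[0,1]}|f|$, and use the first identity above to write
$$f(x) - B_n(f)(x) \;=\; \sum_{k=0}^{n}\bigl[f(x)-f(k/n)\bigr]\binom{n}{k}x^{k}(1-x)^{n-k}.$$
Splitting the sum according to whether $|k/n - x| < \delta$ or $|k/n-x| \ge \delta$, the close-indices contribute at most $\epsilon/2$ by the choice of $\delta$, while the far-indices are bounded by $2M$ times the binomial weight on the far set, which by a Chebyshev-type inequality using the variance estimate is at most $2M/(4n\delta^{2})$. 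Choosing $n$ sufficiently large makes the total error less than $\epsilon$ uniformly in $x$. The main obstacle is establishing the variance identity cleanly; once that combinatorial step is in hand, the uniform-continuity/Chebyshev splitting is routine.
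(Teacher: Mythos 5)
Your proposal is correct, but note that the paper contains no proof of this statement at all: it is quoted as a known result with a citation to Rudin, and the paper then uses it as a black box in characterizing topological divisors of zero in $C[a,b]$. So the comparison is really with the proof in the cited source. Rudin's argument is different from yours: after reducing to $[0,1]$ and subtracting a linear function so that $f(0)=f(1)=0$, he approximates $f$ by convolution with the polynomial (Landau) kernel $Q_n(x)=c_n(1-x^2)^n$, where $c_n$ normalizes $\int_{-1}^{1}Q_n=1$, and shows the convolutions are polynomials converging uniformly to $f$. Your Bernstein-polynomial route is equally classical and complete as sketched: the three binomial identities and the resulting variance bound $x(1-x)/n\le 1/(4n)$ are exactly what is needed, and the uniform-continuity/Chebyshev splitting you describe closes the argument, giving the quantitative bound $\|f-B_n(f)\|\le \epsilon/2 + 2M/(4n\delta^2)$, hence an explicit admissible $n$. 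What your approach buys over Rudin's is an explicit, linear, positive approximation operator with a concrete error rate in terms of the modulus of continuity and no integration theory; what Rudin's buys is a shorter path to generalizations via approximate identities. Two small points of hygiene: the complex-valued reduction should approximate the real and imaginary parts each within $\epsilon/2$ (as you implicitly do for the real case), and the affine substitution requires $a<b$, the case $a=b$ being trivial. Neither affects correctness.
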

\begin{definition}(\cite{Simmons}){[Zero Divisors]}
	Let $R$ be a ring. Then an element $z\in R$ is said to be a zero divisor if  either $zx=0$ for some non-zero $x \in R$ or $yz=0$ for some non-zero $y \in R$.
\end{definition}
\begin{definition}(\cite{Conway,Simmons})[Topological Divisors of Zero]
	An element $z$ in a Banach algebra $\mathcal{A}$ is called a topological divisor of zero if there exists a sequence $(z_n)_{n=1}^\infty$ in $\mathcal{A}$ such that \begin{enumerate}
		\item[$(1)$] $\|z_n\|=1 \quad \forall\ n \in \mathbb{N}$;
		\item[$(2)$] Either $zz_n \to 0$ or $z_nz\to 0$ as $n\to \infty$.
	\end{enumerate}
\end{definition}
We give a proof of the following lemma for the sake of completeness.
\begin{lemma}\label{shiker1}
	The set of all topological divisors of zero in a Banach algebra is a closed set.
\end{lemma}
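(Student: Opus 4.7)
The plan is to let $T \subseteq \mathcal{A}$ denote the set of topological divisors of zero and show that whenever a sequence $(z_k)_{k=1}^\infty \subseteq T$ converges to some $z \in \mathcal{A}$, we must have $z \in T$. Using Lemma \ref{shiker4}(2), this will give that $T$ is closed.

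For each $k$, membership of $z_k$ in $T$ gives a unit-norm sequence $(u_{k,n})_{n=1}^\infty$ in $\mathcal{A}$ with either $z_k u_{k,n}\to 0$ or $u_{k,n} z_k\to 0$ as $n\to\infty$. Since $\mathbb{N}$ splits into these two types and $z_k\to z$, at least one type contains infinitely many indices, and we can pass to that subsequence (which still converges to $z$). Hence we may assume without loss of generality that every $z_k$ satisfies $z_k u_{k,n}\to 0$ as $n\to\infty$; the other case is symmetric.

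Next I would run a diagonal argument: for each $k$, choose $n_k$ so large that $\|z_k u_{k,n_k}\| < 1/k$, and set $w_k := u_{k,n_k}$, so $\|w_k\|=1$ for every $k$. By the triangle inequality and submultiplicativity of the norm,
$$\|z w_k\| \;\leq\; \|(z-z_k)w_k\| + \|z_k w_k\| \;\leq\; \|z-z_k\| + \tfrac{1}{k} \;\longrightarrow\; 0,$$
which exhibits $(w_k)$ as a witness that $z \in T$.

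The only real obstacle is the bookkeeping caused by the disjunction (left vs.\ right) in the definition of a topological divisor of zero: the witnessing sequences for different $z_k$ could be of different types, so a naive diagonal sequence $(w_k)$ might mix left- and right-annihilating behaviour and fail to converge. This is resolved cleanly by the pigeonhole step above, after which the triangle-inequality estimate is routine.
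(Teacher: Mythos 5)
Your proof is correct, and it takes a genuinely different route from the paper. The paper defines $\varphi(a)=\inf_{\|b\|=1}\|ab\|$, observes that $a$ is a topological divisor of zero exactly when $\varphi(a)=0$, and then proves continuity of $\varphi$ by a two-sided $\limsup$/$\liminf$ estimate, so that the set in question is the preimage of the closed set $\{0\}$; in effect the paper establishes the stronger quantitative fact that $\varphi$ is continuous (it is in fact $1$-Lipschitz, since $|\varphi(a)-\varphi(a')|\leq\|a-a'\|$), which is reusable beyond the lemma. You instead argue sequential closedness directly: a pigeonhole reduction on the left/right disjunction, a diagonal extraction of witnesses $w_k=u_{k,n_k}$ with $\|z_kw_k\|<1/k$, and the estimate $\|zw_k\|\leq\|z-z_k\|+\frac{1}{k}\to 0$, which is elementary and entirely routine once the reduction is made. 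Notably, your pigeonhole step addresses a point the paper's proof quietly suppresses: with the paper's two-sided definition, $\varphi(a)=0$ characterizes only the ``right-witness'' case $az_n\to 0$, and a fully faithful version of the paper's argument would also need $\psi(a)=\inf_{\|b\|=1}\|ba\|$, writing the set of topological divisors of zero as the union of the two closed zero sets of $\varphi$ and $\psi$. So your approach trades the paper's stronger continuity statement for a more elementary argument that is, on the left/right issue, actually more careful.
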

\begin{proof}
	Let $\mathcal{A}$ be a Banach algebra. Define $\varphi:\mathcal{A}\to [0,\infty)$ as $$\varphi(a)=\inf_{\|b\|=1}\|ab\|~~ \forall a\in \mathcal{A}.$$
	Then we observe that $a$ is a topological divisor of zero if and only if $\varphi(a)=0.$ To get the desired conclusion, it is sufficient to prove that $\varphi$ is continuous. To this end, let $(a_n)_{n=1}^\infty$ be a sequence in $\mathcal{A}$ such that $a_n \to a$ as $n\to \infty.$ Let $\epsilon>0.$ Then there exists $b\in \mathcal{A}$ with $\|b\|=1$ such that 
	\begin{equation}\label{shailesh1}
		\varphi(a)\leq\|ab\|<\varphi(a)+\epsilon.
	\end{equation}
	Further, we also have $\varphi(a_n)\leq\|a_nb\|$ for all $b$ with $\|b\|=1$ and $\mbox{ for all } n\geq 1.$ This together with \eqref{shailesh1} implies that
	$$\limsup_{n\to \infty}\varphi(a_n) \leq \limsup_{n\to \infty}\|a_nb\|=\lim_{n\to \infty}\|a_nb\|=\|ab\|<\varphi(a)+\epsilon,$$
	as $\epsilon$ is arbitrary, we get that $\underset{n\to\infty }{ \limsup} ~\varphi(a_n)\leq\varphi(a).$

	Next, let $\epsilon>0.$ Pick a sequence $(b_n)_{n=1}^\infty$ in $\mathcal{A}$ with $\|b_n\|=1$ such that 
	\begin{equation}\label{shailesh2}
		\|a_nb_n\|<\varphi(a_n)+\epsilon~~~\forall n\geq 1.	
	\end{equation}	
	\par
	Also, we have 
	
	$$|\|a_nb_n\|-\|ab_n\||\leq\|(a_n-a)b_n\|\leq\|a_n-a\| \to 0 \mbox{ as } n\to\infty.$$
	This gives that for sufficiently large  $n,$ we have $\|ab_n\|-\epsilon<\|a_nb_n\|<\|ab_n\|+\epsilon,$
	This together with \eqref{shailesh2} gives that 
	$$\varphi(a)\leq\|ab_n\|<\|a_nb_n\|+\epsilon<\varphi(a_n)+2\epsilon,$$
	as $\epsilon$ is arbitrary, the preceding inequality gives that $\varphi(a)\leq \underset{n\to\infty }{\liminf}\varphi(a_n).$
	Thus, we must have $\underset{n\to\infty}{\lim}\varphi(a_n)=\varphi(a).$ This completes the proof.
	
\end{proof}

S.J Bhatt, H.V.Dedania (\cite{Bhatt}) proved the following result.
\begin{Theorem}
	Every element of a complex Banach algebra $(\mathcal{A},\|\cdot\|)$ is a topological divisor of zero (TDZ), if at least one of the following holds:
	\begin{enumerate}
		\item[$(1)$] $\mathcal{A}$ is infinite dimensional and admits an orthogonal basis.
		\item[$(2)$] $\mathcal{A}$ is a nonunital uniform Banach algebra ($u\mathcal{B}$-algebra) in which the Silov boundary $\partial \mathcal{A}$ coincides with the carrier space (the Gelfand space) $\Delta(\mathcal{A})$ (in  particular, $\mathcal{A}$ is nonunital regular $u\mathcal{B}$-algebra).
		\item[$(3)$] $\mathcal{A}$ is a nonunital hermitian $Banach^\ast$-algebra with continuous involution (in particular, $\mathcal{A}$ is a nonunital $\mathcal{C}^\star-$algebra). 	
	\end{enumerate}
\end{Theorem}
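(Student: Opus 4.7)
The plan is to handle the three cases separately, since the hypotheses are structurally quite different; in each case the task is, given an arbitrary $a\in\mathcal{A}$, to produce a sequence $(z_n)\subseteq\mathcal{A}$ with $\|z_n\|=1$ and $az_n\to 0$.

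For part (1) I would argue directly from the definition of an orthogonal basis $(e_n)_{n=1}^\infty$. Every nonzero idempotent satisfies $\|e_n\|\geq 1$, since $\|e_n\|=\|e_n^2\|\leq\|e_n\|^2$. Writing $a=\sum_{n=1}^\infty\alpha_n e_n$, the standard Schauder-basis estimate forces $\|\alpha_n e_n\|\to 0$. Setting $z_n:=e_n/\|e_n\|$ gives $\|z_n\|=1$, and the orthogonality $e_ie_j=\delta_{ij}e_i$ collapses the product to $az_n=\alpha_n e_n/\|e_n\|$, whose norm is bounded by $\|\alpha_n e_n\|\to 0$. Infinite-dimensionality is used only to ensure that the orthogonal sequence is genuinely infinite.

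For part (2) I would pass to the Gelfand representation. For a uniform Banach algebra, the norm equals the spectral radius, so the Gelfand map $a\mapsto\hat{a}$ is an isometric embedding into $C_0(\Delta(\mathcal{A}))$; nonunitality forces $\Delta(\mathcal{A})$ to be non-compact, and so each $\hat{a}$ vanishes at infinity. The hypothesis $\partial\mathcal{A}=\Delta(\mathcal{A})$ supplies a dense collection of peak (or strong boundary) points, allowing the construction of norm-one elements $z_n\in\mathcal{A}$ whose Gelfand transforms are sharply concentrated in arbitrarily small neighborhoods of a sequence of characters $\phi_n\to\infty$ chosen so that $\hat{a}(\phi_n)\to 0$. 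Splitting $\|\hat{a}\hat{z}_n\|_\infty$ into contributions from inside and outside these neighborhoods then yields $\|az_n\|\to 0$.

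For part (3) I would route the argument through an enveloping $C^*$-algebra. Under the hermitian hypothesis, the Shirali--Ford theorem gives $\sigma(a^*a)\subseteq[0,\infty)$, and with continuity of the involution the Pt\'ak function $p(a):=r(a^*a)^{1/2}$ becomes a continuous $C^*$-seminorm on $\mathcal{A}$; completing $\mathcal{A}/\ker p$ produces a $C^*$-algebra $B$ with a continuous $\ast$-homomorphism $\pi:\mathcal{A}\to B$ of dense range that inherits nonunitality. In $B$ every element is already a TDZ: for $b\in B$ one has $0\in\sigma(b^*b)$, and if $0$ is an accumulation point one uses continuous functional calculus on $b^*b$ to produce $z_n$ with $\|z_n\|=1$ and $\|bz_n\|\to 0$, while if $0$ is isolated, the corresponding spectral projection exhibits $b$ as an outright zero divisor. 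The main obstacle, where I expect the real work, is pulling such a TDZ witness back from $B$ to a norm-one sequence in $\mathcal{A}$ itself: since $\pi$ need not be isometric, the transfer requires using the hermitian structure and the continuity of the involution to relate the two norms, and this is the technical heart of the Bhatt--Dedania argument.
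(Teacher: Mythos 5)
First, a point of comparison that matters here: the paper you were given does \emph{not} prove this theorem at all --- it is quoted from the article of Bhatt and Dedania \cite{Bhatt} purely as motivation for what follows, so there is no internal proof to measure you against; the benchmark is the original three-page article. Against that benchmark, your part (1) is correct and complete, and is exactly the standard argument: $\|e_n\|\geq 1$ for nonzero idempotents, $\alpha_ne_n\to 0$ from convergence of the basis expansion, $z_n=e_n/\|e_n\|$, and orthogonality plus continuity of multiplication giving $ae_n=\alpha_ne_n$. Your part (2) is also the right route in outline (isometric Gelfand embedding into $C_0(\Delta(\mathcal{A}))$, strong boundary points supplied by $\partial\mathcal{A}=\Delta(\mathcal{A})$, peaking elements concentrated near characters escaping every compact set, and the inside/outside splitting of $\|\hat{a}\hat{z}_n\|_\infty$); the two facts you assert without proof --- that a nonunital $u\mathcal{B}$-algebra has noncompact carrier space (Silov idempotent theorem applied to the unitization, using semisimplicity), and that strong boundary points are dense in the Silov boundary in the nonunital setting --- are true and citable, so this is acceptable as a sketch.

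Part (3), however, contains a genuine gap, and it is not quite the one you flagged. Your claim that the enveloping $C^*$-algebra $B$ ``inherits nonunitality'' is false. Take $\mathcal{A}=M_2(\mathbb{C})\oplus N$ with the $\ell^1$-norm and componentwise operations, where $N=\mathbb{C}$ carries the zero product and conjugation as involution. Then $\mathcal{A}$ is a nonunital hermitian Banach $^*$-algebra with isometric involution (for $x=(m,t)$ one computes $\sigma_{\mathcal{A}}(x)=\sigma(m)\cup\{0\}$), but the Pt\'ak function is $p((m,t))=r(m^*m)^{1/2}=\|m\|$, so $\ker p=N$ and $B\cong M_2(\mathbb{C})$ is unital; moreover $\pi((1,0))=1_B$ is not a topological divisor of zero in $B$, even though $(1,0)$ is an outright zero divisor in $\mathcal{A}$, annihilating $(0,1)$. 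So the first stage of your reduction already fails, and the pull-back step you correctly identified as the ``technical heart'' is not a fillable technicality along your route: $\pi$ is only norm-decreasing up to a constant, and elements of $\ker p$ have $B$-norm $0$ but $\mathcal{A}$-norm $1$, so a witness $z$ with $\|\pi(a)z\|_B$ small gives no control whatsoever on $\|ax\|_{\mathcal{A}}$ for any approximate preimage $x$ of $z$. Any correct proof of (3) must manufacture the norm-one witnesses inside $\mathcal{A}$ itself, using $0\in\sigma(a^*a)$ (the ideal argument in the unitization), Shirali--Ford, and Pt\'ak's hermitian spectral estimates, as the original article does; the topological-divisor property simply does not factor through $B$. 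A minor further slip in your $C^*$-digression: when $0$ is isolated in $\sigma(b^*b)$, the spectral projection $\chi_{\{0\}}(b^*b)$ lies in the unitization, not in $B$; one must instead use the complementary projection $q=\chi_{[\delta,\infty)}(b^*b)\in B$ and witnesses of the form $c-qc$.
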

Motivated by the above theorem, we characterize zero divisors and topological divisors of zero in $C[a,b]$ and $\ell^\infty.$ We also show that zero is the only zero divisor in $\mathscr{A}(\mathbb{D}).$ Further, we give a class of singular elements of $\mathscr{A}(\mathbb{D}),$ which are not topological divisors. Finally, we construct a class of topological divisors of zero in $\mathscr{A}(\mathbb{D}),$ which are not zero divisors. Several results of this paper are new and methods of proof of all the results given in this paper are new and interesting to the best of our knowledge and understanding.

\section{A characterization of Zero divisors and Topological divisors of zero in the Banach algebra $C[a,b]$}
The following theorem gives a complete characterization of zero divisors of $C[a,b]$. 
\begin{Theorem}
	An element $f\in C[a,b]$ is a zero divisor if and only if  zero set of $f$ contains a non-empty open interval.
\end{Theorem}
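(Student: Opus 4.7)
The plan is to prove both implications directly, using continuity of elements of $C[a,b]$ together with a straightforward bump-function construction.

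For the forward direction, I would suppose $f$ is a zero divisor, so there is a nonzero $g\in C[a,b]$ with $fg=0$. The pointwise identity $f(x)g(x)=0$ means $[a,b]=Z_f\cup Z_g$. Since $g\not\equiv 0$, pick $x_0\in[a,b]$ with $g(x_0)\neq 0$. By continuity of $g$, there is $\delta>0$ such that $|g(x)|>|g(x_0)|/2$ on $(x_0-\delta,x_0+\delta)\cap[a,b]$, so $f$ must vanish identically on this intersection. If $x_0\in(a,b)$ this is already a non-empty open interval contained in $Z_f$; if $x_0=a$ or $x_0=b$, simply shrink to $(a,a+\delta)$ or $(b-\delta,b)$, respectively, which are still non-empty open intervals inside $Z_f$.

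For the converse, suppose $Z_f$ contains a non-empty open interval $(c,d)\subseteq[a,b]$. I would construct an explicit continuous bump $g\in C[a,b]$ supported in $(c,d)$ and nonzero there, for instance the piecewise linear tent
\begin{equation*}
g(x)=\max\!\left(0,\ \tfrac{d-c}{2}-\left|x-\tfrac{c+d}{2}\right|\right),
\end{equation*}
or equivalently $g(x)=(x-c)(d-x)$ on $[c,d]$ and $0$ elsewhere. Then $g$ is continuous on $[a,b]$, $g\not\equiv 0$ (it attains a positive value at the midpoint of $(c,d)$), and for every $x\in[a,b]$ either $x\notin(c,d)$, whence $g(x)=0$, or $x\in(c,d)\subseteq Z_f$, whence $f(x)=0$. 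In either case $f(x)g(x)=0$, so $fg\equiv 0$ and $f$ is a zero divisor.

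There is really no serious obstacle here: the only small care needed is in the forward direction to handle the case where the non-vanishing point of $g$ is one of the endpoints $a$ or $b$, so that the neighborhood produced by continuity still contains an honest non-empty open subinterval of $[a,b]$. The reverse direction is a direct construction and requires no approximation machinery; the Weierstrass theorem will only come into play later for the characterization of topological divisors of zero.
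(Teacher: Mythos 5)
Your proof is correct, and your converse direction is essentially the paper's: your closed-form tent $g(x)=\max\left(0,\ \tfrac{d-c}{2}-\left|x-\tfrac{c+d}{2}\right|\right)$ is exactly the piecewise linear bump the paper defines case by case on $(c,\tfrac{c+d}{2}]$ and $[\tfrac{c+d}{2},d)$. Where you genuinely diverge is the forward direction. The paper argues by contradiction and with topological machinery: assuming $Z_f$ contains no interval, it invokes its preliminary lemmas to conclude that $Z_f$ is closed and nowhere dense, hence that $V_f=[a,b]\setminus Z_f$ is open and dense, observes that any $g$ with $fg=0$ must vanish on $V_f$, and then uses sequential density plus continuity of $g$ to force $g\equiv 0$, a contradiction. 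You instead argue directly: pick $x_0$ with $g(x_0)\neq 0$, use continuity of $g$ to get a relative neighborhood on which $|g|$ is bounded below, conclude that $f$ vanishes identically there, and — correctly, this is the one point needing care — trim to an honest open subinterval when $x_0$ is an endpoint $a$ or $b$. Your route is shorter and more elementary: it needs no nowhere-dense/dense-complement lemmas, no contradiction, and it handles $f\equiv 0$ with no special case (the paper's phrasing quietly assumes $f\neq 0$ in the converse). What the paper's contrapositive formulation buys is the explicitly stated intermediate fact that when $Z_f$ has empty interior the only annihilator of $f$ is $g=0$, tying the result to the nowhere-density viewpoint it has set up; what yours buys is economy, and it also transfers verbatim to $C(X)$ for compact Hausdorff $X$ with ``non-empty open interval'' replaced by ``non-empty open set,'' since it never really uses the order structure of $[a,b]$ except to name the subinterval.
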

\begin{proof}
	Let $f\in C[a,b]$ and let $Z_f=\{x\in [a,b] : f(x)=0\}$ be the zero set of $f$ which contains a non-empty open interval $(c,d)$.
	
	Define $g:[a,b]\to\mathbb{R}$ by
	$$g(x)=\begin{cases}
		0,&\mbox{ if } x \in [a,b]\setminus (c,d);\\
		x-c,&\mbox{  if } c<x\leq \frac{c+d}{2};\\
		d-x,&\mbox{ if } \frac{c+d}{2}\leq x<d.
	\end{cases}$$

	\begin{figure}[H]
		\begin{adjustbox}{center}
			\begin{tikzpicture}[scale=0.56]
				\draw[->,color=gray,ultra thick] (-9,0) -- (9,0) node[right] {\textbf{\large x-axis}};
				\draw[-,color=gray,ultra thick] (0,0) -- node[below=2.5cm]{}(0,6)  {};
				\draw[color=red,ultra thick][domain=-5:0]   plot (\x,{5+\x})   node[pos=] {};
				\draw[color=red,ultra thick][domain=0:5]   plot (\x,{5-\x})   node[pos=] {};
				\draw[-,color=red,ultra thick] (-8,0) -- (-5,0) node[right] {};
				\draw[-,color=red,ultra thick] (5,0) -- (8,0) node[right] {};
				\draw[dashed, thick,black] (0, 0) -- node[below=0.3cm]{$\frac{c+d}{2}$} (0,0) ;
				\draw[dashed, thick,black] (-5, 0) -- node[below=0.3cm]{$c$} (-5,0) ;
				\draw[dashed, thick,black] (5, 0) -- node[below=0.3cm]{$d$} (5,0) ;
				\draw[dashed, thick,black] (-8, 0) -- node[below=0.3cm]{$a$} (-8,0) ;
				\draw[dashed, thick,black] (8, 0) -- node[below=0.3cm]{$b$} (8,0) ;
				\draw[dashed, thick,black] (0,5) -- node[left=0.3cm]{$\frac{d-c}{2}$} (0,5) ;
				%	\node[draw=blue,thick,dashed, rounded corners,fit=(-9,0) (9,0) (0,0) (0.6)](1) {};
			\end{tikzpicture}
		\end{adjustbox}
		\caption{Graph of the function $g$}
	\end{figure}
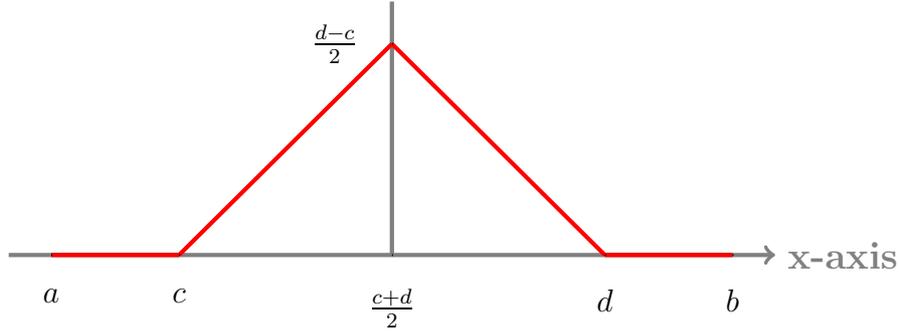
	
	Clearly $ g(x)\neq 0$ on $(c,d)\subseteq [a,b] $ and is a continuous function on $[a,b]$, hence $g\in C[a,b].$
	
	Since $f(x)=0$  on  $ Z_f$, and $ g(x)=0$  on  $V= [a,b]\setminus (c,d)$, then $(fg)(x)=0 \ \forall\ x\in[a,b]$. This shows that $f$ is a zero divisor of $C[a,b]$.
	
	Conversely,	let $f\in C[a,b]$ be a zero divisor. 
	Now suppose  $0\neq f\in C[a,b]$  and on the contrary, assume that $Z_f$  does not contain any non-empty open interval. Then by Lemma $\ref{shiker}$ and Lemma $\ref{shiker2}$, $Z_f$ is a closed nowhere dense set.
	Let $V_f=[a,b]\setminus Z_f$, then by Lemma $\ref{shiker3}$, $V_f$ is an open dense set in $[a,b]$.
	Since $f$ is a zero divisor, there exists  $0\ne g\in C[a,b]$ such that  $(fg)(x)=0 \ \forall\ x\in [a,b]$. Since $f\neq 0$ on $V_f$, so $g(x)=0 \ \forall \ x\in V_f$. 	
	
	Since $V_f$ is an open dense set in $[a,b]$, then from Lemma $\ref{shiker4}$, for each $x\in[a,b]$, there exists  a  sequence $(x_n)_{n=1}^\infty$ in $V_f$ such that   $x_n\to x$ as $n\to \infty$. But $x_n\in V_f$, so $g(x_n)=0 \ \forall\ n\in\mathbb{N}$. Since $g$ is continuous on $[a,b]$, then $g(x)=0.$ Thus $g=0$, which is a contradiction. Hence $Z_f$ must contains a non-empty open interval.
\end{proof}

\begin{lemma}\label{Le5}
	Let $\mathcal{A}$ be a commutative Banach algebra and $x\in \mathcal{A}$ be a topological divisor of zero. Then for each $y\in \mathcal{A},\ xy$ is also a topological divisor of zero. 
	
\end{lemma}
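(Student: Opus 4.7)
The plan is to reuse the very same witnessing sequence that certifies $x$ as a topological divisor of zero. By definition, there exists a sequence $(z_n)_{n=1}^\infty$ in $\mathcal{A}$ with $\|z_n\|=1$ for all $n$, such that $x z_n \to 0$ as $n\to\infty$ (or $z_n x \to 0$, but commutativity makes these conditions identical).

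The key observation is that, by commutativity of $\mathcal{A}$, we can rewrite
\[
(xy)z_n \;=\; x(y z_n) \;=\; x(z_n y) \;=\; (x z_n)\, y,
\]
and submultiplicativity of the norm then yields
\[
\|(xy)z_n\| \;\leq\; \|x z_n\|\,\|y\| \;\longrightarrow\; 0 \quad\text{as } n\to\infty.
\]
Since $\|z_n\|=1$ for all $n$, the same sequence $(z_n)$ witnesses that $xy$ is a topological divisor of zero.

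There is essentially no obstacle: the entire argument is a one-line application of commutativity together with submultiplicativity. The only thing worth mentioning is the degenerate case $y=0$, where $xy=0$; this still qualifies as a topological divisor of zero because any unit-norm sequence $(z_n)$ (for instance $z_n = z_1$ repeated, assuming $\mathcal{A}\neq\{0\}$, which is implicit since $\mathcal{A}$ contains a topological divisor of zero) satisfies $0\cdot z_n = 0 \to 0$. Thus the statement holds uniformly for every $y\in\mathcal{A}$.
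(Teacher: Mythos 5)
Your proof is correct and follows essentially the same route as the paper: reuse the unit-norm witnessing sequence $(z_n)$ for $x$ and apply submultiplicativity, $\|(xy)z_n\| \leq \|xz_n\|\,\|y\| \to 0$, with commutativity used to regroup the product. Your extra remarks (the $y=0$ case and the equivalence of left/right conditions under commutativity) are harmless additions to what is the same one-line argument.
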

\begin{proof}
	Let $x\in \mathcal{A}$ be the topological divisor of zero. Then there exists a sequence	$(x_n)_{n=1}^\infty$ in $\mathcal{A}$ such that $\|x_n\|=1$, for all n$\in \mathbb{N}$ and $xx_n\to 0$ as $n\to\infty$.
	Let $y\in \mathcal{A}$ be any element. Then, we have  
	$$\|yxx_n\|\leq\|y\|\|xx_n\|.$$
	Since $xx_n\to0$ as $n\to\infty$, then
	$$\|(yx)x_n\| \to 0.$$
	Hence 
	$yx$ is a topological divisor of zero.
\end{proof}
The following theorem gives a complete characterization of the topological divisors of zero in $C[a,b]$.
\begin{Theorem}
	An element $f\in C[a,b]$ is a topological divisor of zero if and only if $f$ has at least one zero in $[a,b].$ 
\end{Theorem}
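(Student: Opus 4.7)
The plan is to prove both directions directly, handling the ``no zero'' case via the contrapositive.

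For the forward direction, suppose $f$ has no zero on $[a,b]$. Since $[a,b]$ is compact and $|f|$ is continuous and strictly positive, it attains a positive infimum $m>0$, so $1/f \in C[a,b]$ and $f$ is invertible. I would then invoke the standard observation that an invertible element is never a topological divisor of zero: if $fg = 1$ and $(f_n)$ is any sequence of unit vectors in $C[a,b]$, then
$$1 = \|f_n\| = \|g f f_n\| \leq \|g\|\,\|f f_n\|,$$
so $\|f f_n\| \geq 1/\|g\| > 0$ stays bounded below, ruling out $f f_n \to 0$.

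For the reverse direction, pick $c \in [a,b]$ with $f(c) = 0$ and build an explicit sequence of triangular ``bump'' functions concentrated at $c$. Define
$$f_n(x) = \max\{0,\ 1 - n|x - c|\}, \qquad x \in [a,b].$$
Each $f_n$ is continuous, nonnegative, attains the value $1$ at $c$, and vanishes outside the interval $(c - 1/n,\, c + 1/n)$, so $\|f_n\|_\infty = 1$. To see that $\|f f_n\|_\infty \to 0$, I would fix $\epsilon > 0$ and use continuity of $f$ at $c$ together with $f(c) = 0$ to choose $\delta > 0$ such that $|f(x)| < \epsilon$ whenever $|x-c| < \delta$. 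For $n > 1/\delta$ the product $f f_n$ vanishes outside $(c-\delta, c+\delta)$ and is bounded by $|f(x)| \cdot f_n(x) \leq \epsilon \cdot 1 = \epsilon$ inside, so $\|f f_n\|_\infty \leq \epsilon$.

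There is no substantial obstacle: both halves reduce to standard compactness and continuity arguments. The only point requiring mild care is ensuring that the bumps $f_n$ genuinely lie in $C[a,b]$ when $c$ is an endpoint of $[a,b]$; the uniform definition $\max\{0,\, 1-n|x-c|\}$ handles this case automatically, since one simply ``loses'' half of the support. I note that the abstract advertises use of the Weierstrass approximation theorem; although the bump construction above does not require it, one could alternatively approximate each $f_n$ uniformly by a polynomial (then rescale to have sup-norm exactly $1$) and arrive at the same conclusion with polynomial approximants. In either formulation the crux is identical: controlling $|f|$ on an arbitrarily small neighborhood of a zero, which is precisely what the bump sequence exploits.
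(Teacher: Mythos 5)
Your proof is correct, but in the interesting direction it takes a genuinely different and more elementary route than the paper. For ``$f$ has a zero $\Rightarrow$ $f$ is a topological divisor of zero,'' the paper argues indirectly: it uses the Weierstrass approximation theorem to produce polynomials $q_n \to f$ uniformly with $q_n(c)=0$, factors $q_n(x)=(x-c)r_n(x)$, invokes the lemma that in a commutative Banach algebra any multiple of a topological divisor of zero is again one (taking for granted that $x\mapsto x-c$ is a topological divisor of zero), and finally appeals to the lemma that the set of topological divisors of zero is closed. You instead exhibit an explicit witnessing sequence: the normalized triangular bumps $f_n(x)=\max\{0,\,1-n|x-c|\}$ satisfy $\|f_n\|_\infty=1$ (since $f_n(c)=1$, even when $c$ is an endpoint, as you correctly note) and $\|ff_n\|_\infty\to 0$ by continuity of $f$ at $c$ --- a complete, self-contained argument that needs no approximation theorem, no closedness lemma, and no stability lemma. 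It is worth observing that your construction is not merely an alternative but in a sense fills a gap in the paper's route: the paper's base fact that $x-c$ is a topological divisor of zero is asserted without proof, and the natural way to verify it is precisely a bump-sequence argument like yours. What the paper's approach buys is a demonstration of reusable machinery (closedness of the set of topological divisors of zero, the multiplicativity lemma, Weierstrass approximation), which it advertises in the abstract; what yours buys is brevity and portability --- it works verbatim in $C(K)$ for any compact metric space using $\max\{0,\,1-n\,d(x,c)\}$. Your converse direction (no zero $\Rightarrow$ invertible $\Rightarrow$ $\|ff_n\|\geq 1/\|1/f\|$ for unit vectors $f_n$) is the same argument as the paper's, phrased slightly more quantitatively.
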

\begin{proof}
	Let $f\in C[a,b]$ which has a zero, say $f(c)=0$ for some $c\in[a,b]$. Since $f$ is continuous, by the Weierstrass  approximation theorem,
	for given $\epsilon >0,$ there exists a polynomial $p(x)$ such that
	\begin{equation*}
		|f(x)-p(x)|<\epsilon/2 \quad \forall ~ x\in[a,b]
	\end{equation*}
	This implies $$|f(c)-p(c)|<\epsilon/2,$$
	Thus $$|p(c)|<\epsilon/2.$$
	Consider the polynomial $q(x)=p(x)-p(c)$. Then $q(c)=0$
	and $$|f(x)-q(x)|=|f(x)-p(x)+p(c)|\leq |f(x)-p(x)|+|p(c)|<\frac{\epsilon}{2}+\frac{\epsilon}{2}=\epsilon.$$
	Hence we can find a sequence of polynomials $(q_n)_{n=1}^\infty$ in $C[a,b]$ such that $q_n(c)=0 \ \forall \ n\in \mathbb{N}$  and $q_n\to f$ uniformly on $[a,b].$ 
	
	Since  $q_n(c)=0$, $ q_n(x)=(x-c)r_n(x),$
	where $r_n(x)$ is a polynomial in $C[a,b]$.\\
	Now $z(x)=x-c$ is a topological divisor of zero, therefore by the Lemma $\ref{Le5}$, $q_n$ is a topological divisor of zero for all $n\in \mathbb{N}.$ Since $q_n\to f$ uniformly and by Lemma \ref{shiker1}, the class of topological divisors of zero is a closed set, it follows that $f$ is a topological divisor of zero.
	
	Conversely,
	suppose $f\in C[a,b] $ is a topological divisor of zero. Suppose that $f$ has no zero in $[a,b]$. Then, $\frac{1}{f} \in C[a,b]$. Let $g(x)=\frac{1}{f(x)}$, then $g(x)f(x)=1 \ \forall \ x\in [a,b]$. Since $f$ is a topological divisor of zero, there exists a sequence $(f_n)_{n=1}^\infty$ in $C[a,b]$ with $\|f_n\|=1 \ \forall \ n \in\mathbb{N},$ such that  $ff_n \to 0$ as $n\to\infty$. 
	Since $gf=1$, then, $ f_n=gff_n \to 0$ as $n\to\infty$. This is a  contradiction as $\|f_n\|=1\ \forall\ n\in\mbox{N}$. Hence $f$ must have a zero in $ [a,b].$
\end{proof} 
\begin{remark}
	The above theorem shows that $z(t)=(t-c)^k$ is a topological divisor of zero but is not a zero divisor for each $k>0$ and for each $c\in[a,b]$.
\end{remark}
\section{A characterization of Zero divisors and Topological divisors of zero in the Banach algebra $\ell^\infty$}
In this section, we give a complete characterization of regular elements, zero divisors and topological divisors of zero in the Banach algebra $\ell^\infty.$
\begin{Theorem}
	An element $x=(x_n)_{n=1}^\infty \in \ell^\infty$ is a regular element if and only if $x$ is bounded away from zero.
\end{Theorem}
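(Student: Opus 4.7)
The plan is to exploit the fact that multiplication in $\ell^\infty$ is coordinatewise and that the multiplicative identity is the constant sequence $\mathbf{1} = (1,1,1,\ldots)$. Once that is observed, the candidate for a multiplicative inverse of $x=(x_n)_{n=1}^\infty$ is forced to be the reciprocal sequence $(1/x_n)_{n=1}^\infty$, and the whole statement reduces to checking when this reciprocal sequence lies in $\ell^\infty$.

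For the forward direction, I would assume $x$ is regular and take $y=(y_n)_{n=1}^\infty \in \ell^\infty$ with $xy = yx = \mathbf{1}$. Comparing coordinates gives $x_n y_n = 1$ for every $n$, so in particular no $x_n$ vanishes and $y_n = 1/x_n$. Since $y \in \ell^\infty$, set $\delta = 1/\|y\|_\infty$ (noting $\|y\|_\infty > 0$ because $y \neq 0$); then $|x_n| = 1/|y_n| \geq 1/\|y\|_\infty = \delta$ for all $n$, proving $x$ is bounded away from zero.

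For the converse, I would assume there exists $\delta > 0$ with $|x_n| \geq \delta$ for all $n$. Then each $x_n$ is nonzero, and defining $y_n = 1/x_n$ gives $|y_n| \leq 1/\delta$, so $y = (y_n)_{n=1}^\infty \in \ell^\infty$. A coordinatewise check shows $xy = yx = \mathbf{1}$, hence $x$ is regular in $\ell^\infty$.

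There is really no conceptual obstacle here; the only point that deserves explicit mention is that regularity in the pointwise algebra $\ell^\infty$ forces the inverse to be the reciprocal sequence, which is why \emph{boundedness} of the inverse translates precisely into the hypothesis of being \emph{bounded away from zero}. The argument is symmetric in the two directions and uses only the definition of the sup norm and the commutativity of pointwise multiplication, so no deeper tool from the preceding sections is required.
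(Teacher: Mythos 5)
Your proposal is correct and follows essentially the same route as the paper: extract $y_n = 1/x_n$ coordinatewise from $xy=\mathbf{1}$ and use the sup-norm bound on $y$ to get the lower bound $|x_n|\geq 1/\|y\|_\infty$, then reverse the argument for the converse. The only cosmetic difference is that you name the bound $\delta = 1/\|y\|_\infty$ where the paper uses a generic constant $M$, so there is nothing to add or repair.
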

\begin{proof}
	Let $x=(x_n)_{n=1}^\infty \in \ell^\infty$ be a regular element, then there exists an element $y=(y_n)_{n=1}^\infty$ in $\ell^\infty$ such that $xy=(1,1,...,1,...)=1$. That is $x_ny_n=1$ for all $n\in{\mathbb{N}}.$ This implies that, $y_n=\frac{1}{x_n} ~\forall~ n\in{\mathbb{N}}.$ Since $y \in \ell^\infty,$ $\exists~ M>0$ such that $|y_n|\leq M~  \forall~ n\in\mathbb{N}$. Hence $\frac{1}{M}\leq|x_n| ~\forall~  n\in{\mathbb{N}}.$ Hence $x$ is bounded away from zero. \\
	Conversely, let $x\in \ell^\infty$ be bounded away from zero. Then there exists a positive constant $M$ such that $M\leq |x_n|$ for all $n\in{\mathbb{N}}.$  This implies That $\frac{1}{|x_n|}\leq \frac{1}{M}~ \forall~ n\geq 1.$ Now choosing $y=(\frac{1}{x_n})_{n=1}^\infty,$ we get $y=(y_n)\in \ell^\infty$ and $xy=1$. Hence $x$ is a regular element of $\ell^\infty$.
\end{proof}

The following theorem characterizes zero divisors of $\ell^\infty.$
\begin{Theorem}
	An element $(x_n)_{n=1}^\infty \in \ell^\infty,$ is a zero divisor if and only if $\exists$ $n\geq1$ such that $x_n=0$.
\end{Theorem}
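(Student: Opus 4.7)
The plan is to prove both directions directly from the componentwise nature of multiplication in $\ell^\infty$, without needing any heavy machinery like Weierstrass approximation.

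For the ``if'' direction, suppose there exists some index $n_0 \geq 1$ with $x_{n_0} = 0$. I would exhibit an explicit nonzero annihilator. The natural choice is the ``standard basis'' sequence $e_{n_0} = (\delta_{n, n_0})_{n=1}^\infty$, which has a $1$ in the $n_0$-th slot and zeros elsewhere. Clearly $e_{n_0} \in \ell^\infty$ with $\|e_{n_0}\|_\infty = 1 \neq 0$, and the product $x \cdot e_{n_0}$ is the sequence whose only potentially nonzero entry is $x_{n_0} \cdot 1 = 0$. Hence $x \cdot e_{n_0} = 0$, which shows $x$ is a zero divisor.

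For the ``only if'' direction, I would argue by contrapositive: suppose $x_n \neq 0$ for every $n \geq 1$, and show that $x$ is not a zero divisor. Let $y = (y_n)_{n=1}^\infty \in \ell^\infty$ be arbitrary with $xy = 0$. Since multiplication in $\ell^\infty$ is pointwise, we have $x_n y_n = 0$ for every $n$. Because $x_n \neq 0$ for each $n$, this forces $y_n = 0$ for every $n$, so $y = 0$. Thus $x$ has no nonzero annihilator, so $x$ is not a zero divisor.

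There is really no serious obstacle here; the entire argument hinges on the observation that multiplication in $\ell^\infty$ is coordinatewise and the field $\mathbb{C}$ is an integral domain. The only point deserving a line of care is to note, in the ``if'' direction, that $e_{n_0}$ is genuinely a nonzero element of $\ell^\infty$ (so the zero-divisor definition is satisfied nontrivially), and in the ``only if'' direction that the conclusion $y = 0$ uses the absence of zero divisors in $\mathbb{C}$ applied at each coordinate.
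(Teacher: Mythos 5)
Your proposal is correct and follows essentially the same route as the paper: the paper also annihilates $x$ with the coordinate sequence $e_n$ in the ``if'' direction, and its ``only if'' argument (given a nonzero annihilator $y$, pick $k$ with $y_k \neq 0$ and deduce $x_k = 0$) is just the direct form of your contrapositive. No gaps; the coordinatewise-multiplication observation is all that is needed.
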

\begin{proof}
	Let $x=(x_n)_{n=1}^\infty \in \ell^\infty$ be a zero divisor, then $\exists~ 0\neq$ $y={(y_n)}_{n\geq1}\in \ell^\infty$ such that $xy=(x_ny_n)_{n=1}^\infty=0.$ That is $x_ny_n=0$ $\forall$ $n\in{\mathbb{N}}.$ Since $y\neq0$ then $\exists~ k\geq1$ such that $y_{k}\neq0.$ Therefore, $x_{k}y_{k}=0$ implies that $x_k=0.$\\
	Conversely, let $\exists~ n\geq1$ such that $x_n=0.$ Then for $y=(y_k)_{k=1}^\infty,$ where $y_n=1$ and $y_k=0~ \forall k\neq n,$ we get, $xy=0.$ Hence $x$ is a zero divisor.
\end{proof}
\begin{remark}
	$\mathcal{C}_{00}$ is properly contained in the set of all zero divisors of $\ell^\infty.$
\end{remark}
\begin{proof}
	Let $x=(x_k)_{k=1}^\infty \in \mathcal{C}_{00}$ where $x_k=0~ for~all~k\geq n+1.$ Take $y=(y_k)_{k=1}^\infty$ where $y_k=0$ for all $k\leq n$ and $y_k=1$ for all $k\geq n+1.$ Then $xy=0.$ So $x$ is a zero divisor. Also, note that $x=(0,1,1,...)$ is a zero divisor but not in $\mathcal{C}_{00}.$ So the Inclusion is proper. 
\end{proof}

\begin{Theorem}
	In the Banach algebra $\ell^\infty$ the set of all topological divisors of zero and the set of all singular elements coincide.
\end{Theorem}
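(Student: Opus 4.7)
The plan is to prove the two inclusions separately and then invoke the previously established characterization of regular elements of $\ell^\infty$.

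First, I would establish the easy inclusion: every topological divisor of zero in a unital Banach algebra is singular. Suppose $x \in \ell^\infty$ is regular with inverse $x^{-1}$, and suppose towards contradiction that $x$ is a topological divisor of zero. Then there exists $(z^{(k)})_{k=1}^\infty$ in $\ell^\infty$ with $\|z^{(k)}\|_\infty = 1$ and $xz^{(k)} \to 0$. Multiplying on the left by $x^{-1}$ gives $z^{(k)} = x^{-1}(xz^{(k)}) \to 0$, contradicting $\|z^{(k)}\|_\infty = 1$. Hence every topological divisor of zero is singular. (This argument is independent of the algebra $\ell^\infty$.)

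For the converse, I would use the characterization of regular elements proved earlier in this section: $x = (x_n)_{n=1}^\infty \in \ell^\infty$ is regular if and only if $x$ is bounded away from zero. Therefore, if $x$ is singular, then $x$ is not bounded away from zero, so $\inf_{n \in \mathbb{N}} |x_n| = 0$. Hence there exists a strictly increasing sequence of indices $(n_k)_{k=1}^\infty$ such that $|x_{n_k}| \to 0$ as $k \to \infty$.

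Given such a subsequence, the natural candidate for a witnessing sequence is the standard unit vectors: define $y^{(k)} = (y^{(k)}_j)_{j=1}^\infty \in \ell^\infty$ by $y^{(k)}_{n_k} = 1$ and $y^{(k)}_j = 0$ for $j \ne n_k$. Then $\|y^{(k)}\|_\infty = 1$ for every $k$, and $x y^{(k)}$ is the sequence whose $n_k$-th entry is $x_{n_k}$ and whose other entries are $0$, so $\|x y^{(k)}\|_\infty = |x_{n_k}| \to 0$. This shows $x$ is a topological divisor of zero, completing the proof. There is no serious obstacle here; the essential content is the reduction to the existing characterization of regular elements, after which the unit-vector witness does the work.
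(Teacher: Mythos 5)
Your proof is correct and follows essentially the same route as the paper: both directions rest on the earlier characterization of regular elements as sequences bounded away from zero, followed by extracting a subsequence $(x_{n_k})$ with $|x_{n_k}|\to 0$ and using the standard unit vectors $e_{n_k}$ as the witnessing sequence of norm one. The only difference is cosmetic — you spell out the easy inclusion (topological divisor of zero implies singular) via multiplication by the inverse, which the paper dismisses with ``clearly.''
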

\begin{proof}
	Clearly, a topological divisor of zero is a singular element.	Let $x=(x_n)_{n=1}^\infty$ be a singular element in $\ell^\infty.$  Then $x$ is not bounded away from zero. Hence, there exists a subsequence $(x_{n_k})_{k=1}^\infty$ of $(x_n)_{n=1}^\infty$ such that ${x_{n_k}}\to 0$ as $k\to \infty.$ Take $z^{(k)}=e_{n_k}~ \forall~ k\geq1.$ Then $\|z^{(k)}\|=1 ~\forall~ k\geq1$ and $\| xz^{(k)} \|=|x_{n_k}| \to 0$ as $k\to \infty.$ Thus $\|xz^{(k)}\| =|x_{n_k}|\to 0$ as $k\to \infty.$ This shows that $x$ is a topological divisor of zero. Hence the proof.
\end{proof}

\begin{remark}
	$\mathcal{C}_0$ is properly contained in the set of all topological divisors of zero of $\ell^\infty.$
\end{remark}
\begin{proof}
	Let $x=(x_n)_{n=1}^\infty \in \mathcal{C}_0 .$ Then $|x e_n|=|x_n|$ $\to 0$ as $ n\to \infty.$
	Then  $|x_n| \to 0$ as $ n\to \infty.$ So $x$ is a topological divisor of zero. For the proper containment, take the element $x=(x_n)=(0,1,1,...) \in \ell^\infty,$  which is a topological divisor of zero but $x\notin \mathcal{C}_0.$
\end{proof}

\section{Zero divisors and Topological divisors of zero in the disk algebra $\mathscr{A}(\mathbb{D})$}
In this section, we show that zero is the only zero divisor in the disk algebra $\mathscr{A}(\mathbb{D}).$ We also give a class of singular elements in $\mathscr{A}(\mathbb{D}),$ which are not topological divisors of zero. In the end, we give a class of topological divisors of zero in  $\mathscr{A}(\mathbb{D}),$ which are not zero divisors.
\begin{proposition}
	In the disk algebra $\mathscr{A}(\mathbb{D})$ zero is the only zero divisor. 
\end{proposition}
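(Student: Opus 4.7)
The plan is to argue by contradiction, leveraging the fact that analytic functions on the connected domain $\mathbb{D}$ form an integral domain. Suppose, toward a contradiction, that $f \in \mathscr{A}(\mathbb{D})$ is a nonzero zero divisor. Then there exists some nonzero $g \in \mathscr{A}(\mathbb{D})$ with $(fg)(z) = 0$ for every $z \in \bar{\mathbb{D}}$, and in particular $fg \equiv 0$ on the open set $\mathbb{D}$ where both factors are holomorphic.

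First I would note that because $f$ is continuous on $\bar{\mathbb{D}}$ and $f \not\equiv 0$ on $\bar{\mathbb{D}}$, we also have $f \not\equiv 0$ on $\mathbb{D}$: otherwise continuity would force $f$ to vanish on the closure, contradicting the choice of $f$. Next, since $f$ is a nonzero holomorphic function on the connected open set $\mathbb{D}$, the identity theorem implies that its zero set $Z_f \cap \mathbb{D}$ consists of isolated points only; in particular $U := \mathbb{D} \setminus Z_f$ is open and dense in $\mathbb{D}$.

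On $U$ we have $f(z) \neq 0$, and therefore $fg \equiv 0$ on $U$ forces $g \equiv 0$ on $U$. By density of $U$ in $\mathbb{D}$ together with continuity of $g$, we get $g \equiv 0$ on $\mathbb{D}$, and then continuity on $\bar{\mathbb{D}}$ promotes this to $g \equiv 0$ on $\bar{\mathbb{D}}$, contradicting $g \neq 0$. This contradiction forces $f = 0$, so zero is the only zero divisor in $\mathscr{A}(\mathbb{D})$.

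There is no real obstacle here: the proof is essentially a one-line appeal to the identity theorem plus the observation that elements of $\mathscr{A}(\mathbb{D})$ are determined on $\bar{\mathbb{D}}$ by their values on $\mathbb{D}$. The only point that needs a careful word is ensuring that a nonzero element of $\mathscr{A}(\mathbb{D})$ is genuinely nonzero as a holomorphic function on the open disk, which is immediate from continuity.
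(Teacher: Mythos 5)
Your proof is correct, and its core input --- that a nonzero holomorphic function on the connected open set $\mathbb{D}$ has only isolated zeros --- is the same complex-analytic fact (the identity theorem) that drives the paper's proof, so the two arguments are close; but they are arranged dually, and the difference is worth noting. The paper never uses holomorphy of $f$ at all: mere continuity of $f$ produces one open disk $\mathbb{D}_1 \subseteq \mathbb{D}$ on which $f \neq 0$, whence $g \equiv 0$ on $\mathbb{D}_1$, and the identity principle is then applied to the cofactor $g$ to spread the vanishing to all of $\bar{\mathbb{D}}$. You instead apply the identity theorem to $f$ itself (isolated zeros, so the non-vanishing set $U$ is open and dense in $\mathbb{D}$) and then finish using only continuity of $g$ plus density --- which is precisely the density-plus-continuity scheme the paper uses in its $C[a,b]$ zero-divisor characterization, transplanted to the disk. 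Both arrangements are complete and of essentially equal length; the paper's version places the weaker hypothesis (continuity) on $f$ and the stronger one (analyticity) on $g$, yours the reverse, and your version has the mild expository advantage of making the parallel with the $C[a,b]$ argument explicit, since once the identity theorem gives density of $U$ the rest is purely topological. Your preliminary observation that $f \not\equiv 0$ on $\bar{\mathbb{D}}$ forces $f \not\equiv 0$ on $\mathbb{D}$ is a small but genuine point of care that the paper glosses over, and it is handled correctly.
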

\begin{proof}
	Suppose $0\not\equiv f\in \mathscr{A}(\mathbb{D})$ is a zero divisor. Then there exists $0\neq g\in \mathscr{A}(\mathbb{D})$ such that $(fg)(z)=0$ $\forall\ z\in \mathbb{D}.$ Since $f$ is continuous and $f\not\equiv 0,$ there exists a $z_0 \in \mathbb{D}$ such that $f(z)\neq 0$ in an open disk centered at $z_0,$ say $\mathbb D_1\subseteq \mathbb D.$ Since $(fg)(z)=0$ $\forall\ z\in \mathbb{D}_1.$ It follows that $ g(z)=0$ $\forall\ z\in \mathbb{D}_1$. By Identity principle, $g(z)=0$ $\forall z\in \bar{\mathbb{D}}.$ Thus a non-zero element in $\mathscr{A}(\mathbb{D})$ can not be a zero divisor.
\end{proof}
\begin{remark}
	Every topological divisor is a singular element but the following lemma shows that the converse is not true.
\end{remark}
\begin{lemma}\emph{(\cite{Hoffman,Garcia})}	For a finite sequence $z_1,z_2,...,z_n$ in $\mathbb{D}$ and $\gamma\in \mathbb{T}$, let $$B(z)=\gamma\prod_{i=1}^{n}\frac{z-z_i}{1-\bar{z_i}z}$$ be a finite Blaschke product. Then $B\in \mathscr{A}(\mathbb{D})$ is a singular element but not a topological divisor of zero.
\end{lemma}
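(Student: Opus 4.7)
The plan is to handle the two claims separately, with the singularity being immediate and the non-TDZ claim reducing to the key boundary-modulus property of finite Blaschke products combined with the maximum modulus principle.

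First I would dispatch singularity. Since each $z_i \in \mathbb{D}$ is a zero of $B$, any putative inverse $g \in \mathscr{A}(\mathbb{D})$ would have to satisfy $B(z_i)g(z_i) = 0 \neq 1$. Hence $B$ is not regular. This uses only Definition \ref{anurag1}.

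For the second (and main) part, I would argue by contradiction: suppose $B$ is a topological divisor of zero. Then there exists a sequence $(f_n)_{n=1}^\infty$ in $\mathscr{A}(\mathbb{D})$ with $\|f_n\|_\infty = 1$ and $\|Bf_n\|_\infty \to 0$. The key observation is the well-known fact that for each $i$ and each $z \in \mathbb{T}$,
\[
\left|\frac{z-z_i}{1-\bar{z_i}z}\right| = 1,
\]
which follows from $|z|^2 = z\bar z = 1$ and a direct computation. Since $|\gamma| = 1$, this gives $|B(z)| = 1$ for every $z \in \mathbb{T}$, so $|f_n(z)| = |B(z) f_n(z)|$ on $\mathbb{T}$.

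Combining this with the maximum modulus principle applied to $f_n \in \mathscr{A}(\mathbb{D})$ yields
\[
\|f_n\|_\infty = \max_{z \in \mathbb{T}} |f_n(z)| = \max_{z \in \mathbb{T}} |B(z) f_n(z)| \leq \|Bf_n\|_\infty \to 0,
\]
contradicting $\|f_n\|_\infty = 1$. The only subtle point, which I would highlight explicitly, is that the argument requires applying the maximum modulus principle on the closed disk to $f_n$ (legitimate since $f_n \in \mathscr{A}(\mathbb{D})$), so the sup-norm on $\bar{\mathbb{D}}$ is indeed attained on $\mathbb{T}$; everything else is routine. I do not anticipate any serious obstacle beyond verifying the boundary-modulus identity for the Blaschke factors.
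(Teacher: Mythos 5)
Your proposal is correct and follows essentially the same route as the paper: both rest on the fact that $|B(z)|=1$ on $\mathbb{T}$, which together with the maximum modulus principle gives $\|Bf\|=\|f\|$ for all $f\in\mathscr{A}(\mathbb{D})$, forcing any sequence with $\|Bf_n\|\to 0$ to satisfy $\|f_n\|\to 0$, while singularity follows from $B(z_i)=0$. Your only cosmetic difference is applying the maximum modulus principle to $f_n$ directly rather than stating the isometry identity $\|Bf\|=\|f\|$ first, which is the same argument.
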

\begin{proof}
	Clearly $B\in \mathscr{A}(\mathbb{D})$ and $|B(z)|=1$ for all $z\in \mathbb{T}.$ By the Maximum Modulus Principle, for every $f\in \mathscr{A}(\mathbb{D})$, we have 
	\begin{equation}\label{modeq1}
		\|B f\| = \sup_{z\in \bar{\mathbb{D}}} |B(z)(f(z))| = \max_{z\in \mathbb{T}}|B(z)||f(z)|= \|f\|.
	\end{equation}
	$B$ is a singular element in $\mathscr{A}(\mathbb{D}),$ since $B(z_k)=0$ for each $k=1,2,...,n.$ We now assert that $B$ is not a topological divisor of zero.	
	%Suppose on contrary
	Indeed, if there exists a   sequence   $(g_n)_{n=1}^\infty$ in $\mathscr{A}(\mathbb{D})$ such that $B g_n \to 0$ as $n \to \infty$, then from \eqref{modeq1}, we have 
	$$\|B g_n\|= \|g_n\| \quad \forall\ n\in \mathbb{N}.$$
	Hence $(g_n)_{n=1}^\infty$ must converge to $0.$ Therefore $B$ can not be a topological divisor of zero.
\end{proof}	

\begin{Theorem}
	Let $\mathcal{A}=\mathscr{A}(\mathbb{D})$ be the disk algebra. Let $f(z)=\left(\frac{z-z_0}{2}\right)$ for some $z_0 \in \mathbb{C}$. Then $f$ is topological divisor of zero in $\mathcal{A}$ if and only if $|z_0|=1.$
\end{Theorem}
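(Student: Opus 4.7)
The plan is to treat the three cases $|z_0|>1$, $|z_0|<1$, and $|z_0|=1$ separately. The forward direction (if $f$ is a TDZ then $|z_0|=1$) reduces to showing that both $|z_0|>1$ and $|z_0|<1$ force $f$ to not be a TDZ, while the reverse direction requires constructing an explicit witnessing sequence.

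For $|z_0|>1$, I would observe that $f$ has no zero on $\bar{\mathbb{D}}$, so $1/f$ is continuous on $\bar{\mathbb{D}}$ and analytic on $\mathbb{D}$; hence $f$ is regular in $\mathscr{A}(\mathbb{D})$. Since any TDZ is singular, this rules out $f$ being a TDZ.

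For $|z_0|<1$, I would use the Maximum Modulus Principle. For every $z\in\mathbb{T}$ one has $|z-z_0|\geq 1-|z_0|>0$, hence $|f(z)|\geq (1-|z_0|)/2$ on $\mathbb{T}$. For any $g\in \mathscr{A}(\mathbb{D})$ with $\|g\|=1$, the Maximum Modulus Principle gives $\sup_{z\in\mathbb{T}}|g(z)|=1$, and therefore
\[
\|fg\|\;\geq\;\sup_{z\in\mathbb{T}}|f(z)||g(z)|\;\geq\;\frac{1-|z_0|}{2}\,\sup_{z\in\mathbb{T}}|g(z)|\;=\;\frac{1-|z_0|}{2}>0,
\]
so no sequence $(g_n)$ of unit-norm elements can satisfy $fg_n\to 0$.

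For the reverse direction, when $|z_0|=1$, I would construct the explicit sequence $g_n(z)=\left(\frac{1+\bar{z_0}z}{2}\right)^n$. Each $g_n\in \mathscr{A}(\mathbb{D})$ and, since $|1+\bar{z_0}z|/2\leq (1+|z_0||z|)/2\leq 1$ on $\bar{\mathbb{D}}$ with equality attained at $z=z_0$ (where $\bar{z_0}z_0=1$), one has $\|g_n\|=1$. To compute $\|fg_n\|$, parametrize $z\in\mathbb{T}$ by $z=z_0e^{i\theta}$, so that $|z-z_0|=2|\sin(\theta/2)|$ and $|1+\bar{z_0}z|/2=|\cos(\theta/2)|$. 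By the Maximum Modulus Principle,
\[
\|fg_n\|\;=\;\sup_{\theta\in[-\pi,\pi]}|\sin(\theta/2)||\cos(\theta/2)|^n.
\]
A one-variable calculus optimization (setting $t=|\cos(\theta/2)|$ and maximizing $\sqrt{1-t^2}\,t^n$) gives maximum value $\frac{1}{\sqrt{n+1}}\bigl(\tfrac{n}{n+1}\bigr)^{n/2}$, which tends to $0$ as $n\to\infty$. The main obstacle is precisely producing and verifying this sequence; the rest of the argument is a routine application of the results already established in the paper (maximum modulus, regular vs.\ singular dichotomy).
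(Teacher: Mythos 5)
Your proposal is correct and follows essentially the same route as the paper: your witness $g_n(z)=\left(\frac{1+\bar{z_0}z}{2}\right)^n$ is just the paper's $f_n(z)=\left(\frac{z+z_0}{2}\right)^n$ multiplied by the unimodular constant $\bar{z_0}^n$, your optimization giving $\|fg_n\|=\frac{1}{\sqrt{n+1}}\left(\frac{n}{n+1}\right)^{n/2}\to 0$ is the paper's computation in the substitution $z=z_0e^{i\theta}$, and your boundary lower-bound argument for $|z_0|<1$ is the paper's. The only deviation is minor and in your favor: for $|z_0|>1$ you invoke invertibility of $f$ in $\mathscr{A}(\mathbb{D})$ (regular elements are never topological divisors of zero), where the paper only gestures at ``a similar argument,'' and your bound $|f(z)|\geq (1-|z_0|)/2$ on $\mathbb{T}$ silently corrects a factor-of-two slip in the paper's stated inequality $(1-r)<|f(z)|<(1+r)$.
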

\begin{proof}
	Suppose $z_0\in \mathbb{T}.$ Define $f_n(z)=\left(\frac{z+z_0}{2}\right)^n\ \mbox {for each}\ n\in \mathbb{N}$. Since  $ z_0\in \mathbb{T}$, we have
	\begin{align*}
		\ f_n\in \mathcal{A} \mbox{ and } |f_n(z_0)|=|z_0^n|=|z_0|^n=1 \quad \forall\ n\in \mathbb{N}.
	\end{align*}
	Therefore
	$ \|f_n\|=1 \quad \forall\ n \in \mathbb{N}.$
	Now note that
	\begin{equation*}%\label{Toeq11}
		ff_n(z)=\left(\frac{z-z_0}{2}\right)\left(\frac{z+z_0}{2}\right)^n,
	\end{equation*}
	and each $z\in \mathbb{T} $ is of the form  $z=e^{i\theta}$ for some  $\theta \in [0,2\pi]$.
	So $z_0=e^{i\theta_0}$  for some  $\theta_0 \in[0,2\pi].$
	Thus, for each $z\in \mathbb{T}$, we have, 
	
	\begin{align*}
		\frac{z-z_0}{2}&=\frac{e^{i\theta}-e^{i\theta_0}}{2}=ie^{i\left(\frac{\theta+\theta_0}{2}\right)}\sin\left(\frac{\theta-\theta_0}{2}\right),\\ \frac{z+z_0}{2}&=\frac{e^{i\theta}+e^{i\theta_0}}{2}=e^{i\left(\frac{\theta+\theta_0}{2}\right)}\cos(\frac{\theta-\theta_0}{2}).
	\end{align*}
	Therefore $f(z)=ie^{i\left(\frac{\theta+\theta_0}{2}\right)}\sin\left(\frac{\theta-\theta_0}{2}\right)$
	and  $f_n(z)=\left(e^{i\left(\frac{\theta+\theta_0}{2}\right)}\cos\left(\frac{\theta-\theta_0}{2}\right)\right)^n.$\\
	This implies that $|ff_n(z)|=\left|\sin\left(\frac{\theta-\theta_0}{2}\right)\cos^n\left(\frac{\theta-\theta_0}{2}\right)\right|.$
	A simple computation shows that
	$$\|ff_n\|=\frac{1}{\sqrt{1+n}}\left(\sqrt{\frac{n}{n+1}}\right)^{n}.$$\\
	Hence
	$\|ff_n\|=\frac{1}{\sqrt{1+n}}\left(\sqrt{\frac{n}{n+1}}\right)^{n} \to 0 \mbox{ as } n\to\infty.$ 
	Hence $f$ is a topological divisor of zero in $\mathcal{A}.$
	
	Now suppose $z_0 \notin \mathbb{T}.$ Let $r=|z_0|<1.$ We will show that $f$ is not a topological divisor of zero in $\mathcal{A}.$
	
	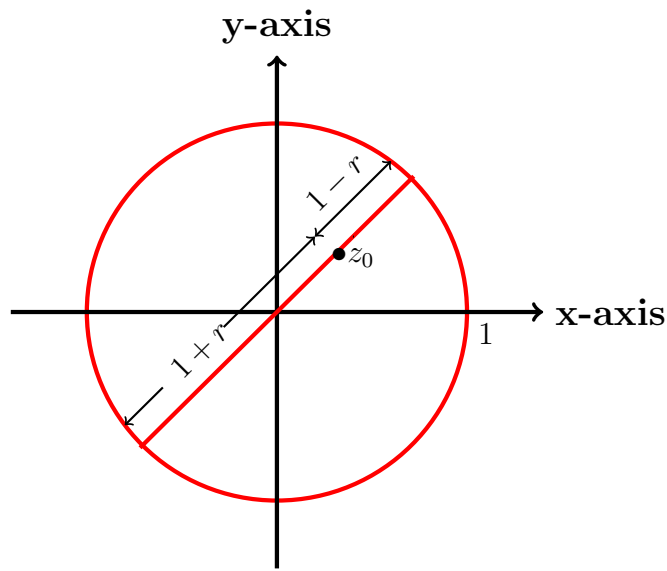
\begin{figure}[H]
		\begin{adjustbox}{center}
			\begin{tikzpicture}
				\node[circle,label=above:{},label=right:{},draw=red!100,minimum width= 5cm,ultra thick](a){};
				\draw[->,ultra thick] (-3.5,0) -- (3.5,0) node[right] {\textbf{\large x-axis}};
				\draw[->,ultra thick] (0,-3.4) -- (0,3.4) node[above] {\textbf{\large y-axis}};
				\draw[color=red,ultra thick][domain=-1.8:1.8]   plot (\x,{\x})   node[right] {};
				\draw[dashed, thick,black] (1, 1) -- node[below=]{$\bullet z_0$} (1,1) ;
				\draw[dashed, thick,black] (2, 0) -- node[below=]{$1$} (3.5,0) ;
				\draw[<-, thick,black] (-2, -1.5) -- node[pos=2,rotate=45]{$1+r$} (-1.5,-1) ;
				\draw[->, thick,black] (-0.7,-0.2) --  (0.5,1) ;
				\draw[<->, thick,black] (0.5,1) -- node[above=, rotate=45]{$1-r$} (1.5,2) ;
			\end{tikzpicture}
		\end{adjustbox}
		\caption{Bounds for $|f(z)|$}
		\label{f2}
	\end{figure}
	From FIGURE \ref{f2}, observe that  $(1-r)<|f(z)|<(1+r) \quad \forall\ z\in \mathbb{T}$.
	
	Suppose there exists a sequence $(f_n)_{n=1}^\infty$ in $\mathcal{A}$ such that $ff_n \to 0$ as $n \to \infty.$
	Since 
	$\|ff_n\|=\sup_{z\in \bar{\mathbb{D}}}{|f(z)f_n(z)|}.$
	Therefore $$(1-r)|f_n(z)|\leq\|ff_n\| \quad \forall\ n\in \mathbb{N} \text{ and } z\in \bar{\mathbb{D}}.$$
	Hence $(1-r)\|f_n\|\leq\|ff_n\|\to 0$ as $n\to \infty$
	implies that $(1-r)\|f_n\|\to 0$ as $n\to \infty.$
	Therefore $f_n \to 0$ as $n\to \infty.$
	Hence $f$ can not be a topological divisor of zero in $\mathcal{A}$.\\
	A similar argument shows that if $r=|z_0|>1,$ then $f(z)=(\frac{z-z_0}{2})$ is not a topological divisor of zero in $\mathcal{A}.$

\end{proof}

\end{document}